\documentclass[a4paper,12pt]{article}

\usepackage{amscd}
\usepackage{amsfonts}
\usepackage{amsmath}
\usepackage{amssymb}
\usepackage{amsthm}
\usepackage[T1]{fontenc} 
\usepackage{here} 
\usepackage{mathrsfs} 
\usepackage{txfonts} 
\usepackage[all]{xy} 
\usepackage{algorithm} 
\usepackage{algpseudocode} 

\allowdisplaybreaks

\theoremstyle{plain}
\newtheorem{thm}{Theorem}[section]

\newtheorem{prp}[thm]{Proposition}

\theoremstyle{definition}
\newtheorem{dfn}[thm]{Definition}

\newtheorem{exm}[thm]{Example}

\newcommand{\vs}[1][0.2]{\vspace{#1in}\noindent\ignorespaces}
\newcommand{\ba}{\begin{array*}}
\newcommand{\ea}{\end{array*}}
\newcommand{\be}{\begin{eqnarray*}}
\newcommand{\ee}{\end{eqnarray*}}
\newcommand{\bi}{\begin{itemize}}
\newcommand{\ei}{\end{itemize}}
\newcommand{\bb}{\vs\begin{itembox}}
\newcommand{\eb}{\end{itembox}}
\newcommand{\bc}{\begin{center}}
\newcommand{\ec}{\end{center}}
\newcommand{\bs}{\vs\begin{screen}}
\newcommand{\es}{\end{screen}}

\def\ens#1{{\mathchoice{\left\{ #1 \right\}}{\{ #1 \}}{\{ #1 \}}{\{ #1 \}}}}
\def\set#1#2{{\mathchoice{\left\{ #1 \middle| #2 \right\}}{\{ #1 \mid #2 \}}{\{ #1 \mid #2 \}}{\{ #1 \mid #2 \}}}}
\def\r#1{\text{\rm #1}}
\def\t#1{\text{#1}}
\def\Bigv#1{\left| #1 \right|}
\def\v#1{{\mathchoice{\Bigv{#1}}{| #1 |}{| #1 |}{| #1 |}}}
\def\Bign#1{\left\| #1 \right\|}
\def\n#1{{\mathchoice{\Bign{#1}}{\| #1 \|}{\| #1 \|}{\| #1 \|}}}

\newcommand{\bN}{\mathbb{N}}

\newcommand{\bQ}{\mathbb{Q}}
\newcommand{\bR}{\mathbb{R}}

\newcommand{\bZ}{\mathbb{Z}}

\newcommand{\rC}{\r{C}}

\newcommand{\rM}{\r{M}}

\newcommand{\N}{\bN}
\newcommand{\Q}{\bQ}
\newcommand{\R}{\bR}
\newcommand{\Z}{\bZ}

\newcommand{\Fp}{\mathbb{F}_p}

\newcommand{\Qp}{\mathbb{Q}_p}

\newcommand{\Zp}{\mathbb{Z}_p}

\algnewcommand\algorithmicbreak{{\bf break}}
\algnewcommand\Break{\algorithmicbreak{}}
\algnewcommand\algorithmiccontinue{{\bf continue}}
\algnewcommand\Continue{\algorithmiccontinue{}}

\title{$p$-adic Character Neural Network}
\author{Tomoki Mihara}
\date{}

\begin{document}

\maketitle
\begin{abstract}
We propose a new frame work of $p$-adic neural network. Unlike the original $p$-adic neural network by S.\ Albeverio, A.\ Khrennikov, and B.\ Tirrozi using a family of characteristic functions indexed by hyperparameters of precision as activation functions, we use a single injective $p$-adic character on the topological Abelian group $\Zp$ of $p$-adic integers as an activation function. We prove the $p$-adic universal approximation theorem for this formulation of $p$-adic neural network, and reduce it to the feasibility problem of polynomial equations over the finite ring of integers modulo a power of $p$.
\end{abstract}
\tableofcontents

\section{Introduction}
\label{Introduction}

Let $p$ be a prime number. The notion of $p$-adic numbers is invented by K.\ Hensel in 1897 in \cite{Hen97}, and plays a central role in modern number theory. Recently, the $p$-adic numbers appears also in other branches of science, because of many significant similarities to and differences from the real numbers. Application of the $p$-adic numbers also appears in computer science. For example, S.\ Albeverio, A.\ Khrennikov, and B.\ Tirrozi studied $p$-adic neural network in \cite{AKT99} and \cite{KT00}, P.\ E.\ Bradley studied dendrograms and clusterings using $p$-adic numbers in \cite{Bra08} and \cite{Bra09}, and so on. Introduction of \cite{Bra25} explains the history well. Especially, $p$-adic equations and $p$-adic optimisation related to $p$-adic regression and $p$-adic neural networks are recently studied as a frontier topic (cf.\ \cite{ZZ23}, \cite{ZZB24}, \cite{BMP25}, \cite{Zub25-1}, \cite{Zub25-2}, \cite{Ngu25}, and \cite{Mih26-1}). We further formulated a $p$-adic counterpart of principal component analysis in \cite{Mih26-2}.

\vs
In this paper, we propose a new formulation of a $p$-adic neural network. In order to explain benefits of the new formulation, we briefly recall the original $p$-adic neural network in \cite{KT00}. Let $I$ a finite set, $X = (\vec{x}_i)_{i \in I} \in (\Qp^N)^I$ a sequence of sample points with $N \in \N$, $Y = (\vec{y}_i)_{i \in I} \in (\Qp^M)^I$ a sequence of observed values corresponding to $X$ with $M \in \N$, and $D \in \N_{> 0}$ a hyperparameter for dimension. For a set $S$, a map $f \colon S \to \Qp$, and an $\vec{s} = (s_n)_{n=0}^{N-1} \in S^N$, we abbreviate $(f(s_n))_{n=0}^{N-1}$ to $f(\vec{s})$.

\vs
The $p$-adic neural network in \cite{KT00} is given as the following optimisation problem under the assumptions $p = 2$, $X \in (\ens{0,1}^N)^I$, and $Y \in (\ens{0,1}^M)^I$:
\be
\begin{array}{ll}
\textrm{minimise} & \n{(\vec{y}_i - (1-1_{p^k \Zp}(A \vec{x}_i)))_{i \in I}} \\
\textrm{subject to} & A \in \rM_{D,N}(\Zp),
\end{array}
\ee
where $1_{p^k \Zp}$ is the characteristic function of the closed ball $p^k \Zp \subset \Qp$ centred at $0$ of radius $\v{p}^k$ for a fixed hyperparameter $k \in \N \setminus \ens{0}$ for precision.

\vs
The $p$-adic universal approximation theorem holds when we vary $k$. There are two natural candidates of formulations with varying $k$: employing $1 - 1_{p^k \Zp}$ for all $k \in \N \setminus \ens{0}$ as activation functions, or extending coefficients of $A$ to $\Qp$ so that the single function $1_{\Zp}$ is sufficient for an activation function. Both formulations are naturally extended to a general setting without the assumptions $p = 2$, $X \in (\ens{0,1}^N)^I$, and $Y \in (\ens{0,1}^M)^I$. For example, the resulting optimisation problem for the latter formulation is given in the following:
\be
\begin{array}{ll}
\textrm{minimise} & \n{(\vec{y}_i - C 1_{\Zp}(A \vec{x}_i + \vec{b}))_{i \in I}} \\
\textrm{subject to} & (A,\vec{b},C) \in \rM_{D,N}(\Qp) \times \Qp^D \times \rM_{M,D}(\Qp)
\end{array}
\ee
We note that $\vec{b}$ can be removed, but we consider it because of the analogy to the real neural network. Although this generalised $p$-adic neural network based on $1_{\Zp}$ theoretically works by the corresponding $p$-adic universal approximation theorem, the multiplication by $A \in \rM_{D,N}(\Qp)$ does not necessarily preserve a compact subspace including the image of the sequence $X$ of sample points, and hence prevents $p$-adic optimisation methods based on topological generators, e.g.\ orthonormal Schauder bases, of the Banach $\Qp$-algebra of $p$-adic functions on a compact topological space. In particular, uniform approximation by polynomials based on $p$-adic Stone--Weierstrass theorem does not work here.

\vs
In order to solve this issue, we introduce a new $p$-adic neural network based on a $p$-adic character on $\Zp$, i.e.\ a continuous group homomorphism $\Zp \to \Qp^{\times}$. Let $\chi$ be an injective $p$-adic character, e.g.\ the $p$-adic exponential function $\exp_p(qx)$ rescaled by $q \in p \Zp$ defined as $4$ when $p = 2$ and $p$ otherwise. {\it The $p$-adic character neural network} is the following optimisation problem:
\be
\begin{array}{ll}
\textrm{minimise} & \n{(\vec{y}_i - C \chi(A \vec{x}_i + \vec{b}))_{i \in I}} \\
\textrm{subject to} & (A,\vec{b},C) \in \rM_{D,N}(\Zp) \times \Zp^D \times \rM_{M,D}(\Qp)
\end{array}
\ee
Since the coefficients of $A$ and $\vec{b}$ are restricted to $\Zp$, the affine transformation preserves the compact subspace $\Zp^N \subset \Qp^N$, and hence we can apply $p$-adic optimisation methods based on topological generators.

\vs
As preceding studies, we recall two alternatives of the $p$-adic neural network. G.\ L.\ R.\ N'guessan invented an alternative frame work employing all functions in van der Put basis as activation functions in \cite{Ngu25}. A.\ P.\ Zubarev invented an alternative frame work replacing the varying affine transformations $A \vec{x} + \vec{b}$ by a single explicit transformation $\Zp^N \to \Zp$ for the case $N > 1$ but instead employing all continuous functions $\phi \colon \Zp \to \Qp$ as activation functions. In particular, both of the two alternatives employ infinitely many activation functions.

\vs
On the other hand, $p$-adic character neural network uses a single injective $p$-adic character $\chi$ as an activation functions. This benefit and the stability of the domain $\Zp^N$ by the affine transformations $A \vec{x} + \vec{b}$ enable us to reduce the $p$-adic character neural network to the feasibility problem of polynomial equations over the finite ring of integers modulo a power of $p$.

\vs
We briefly explain contents of this paper. In \S \ref{Convention}, we explain convention of time and space complexity in this paper. In \S \ref{p-adic Character}, we recall basic properties of $p$-adic characters. In \S \ref{Universal Approximation}, we show the $p$-adic universal approximation theorem for $p$-adic character neural network. In \S \ref{Formulation}, we formulate the $p$-adic character neural network, and reduce it to the feasibility problem of polynomial equations over the finite ring of integers modulo a power of $p$.

\section{Convention}
\label{Convention}

Throughout the paper, let $p$ be a prime number. We denote by $\Fp$ the finite field of integers modulo $p$, $\Zp$ the ring of $p$-adic integers, $\Qp$ the field of $p$-adic numbers, and $\v{\cdot} \colon \Qp \to [0,\infty)$ a $p$-adic absolute value. We note that the $p$-adic absolute value with the normalisation $\v{p} = p^{-1}$ is frequently denoted by $\v{\cdot}_p$, but we do not fix a normalisation.

\vs
We denote by $\N$ the set of non-negative integers. For a $d \in \Z$, we set $\N_{< d} \coloneqq \N \cap [0,d)$, and $\N_{\leq d} \coloneqq \N \cap [0,d]$. For sets $X$ and $Y$, we denote by $X^Y$ the set of maps $Y \to X$. We note that every $d \in \N$ is identified with $\N_{< d}$ in set theory, and hence $X^d$ formally means $X^{\N_{< d}}$, which is naturally identified with the set of $d$-tuples in $X$.

\vs
We denote by $t(r)$ (resp.\ $s(r)$) the worst-case time (resp.\ space) complexity of arithmetic operations $+,-,\times$ for elements in $\N_{< r}$ for an $r \in \N$, and by $t_f(e)$ (resp.\ $s_f(e)$) the worst-case time (resp.\ space) complexity of computation of $f$ modulo $p^e$ for an $e \in \N$. When we describe time complexity and space complexity of an algorithm, we ignore the computational complexity of increment of an index of a for-loop, as it just appears as $+t(n)$ and $+s(n)$ for the size $n$ of the loop.

\vs
For an $e \in \N$, the inverse of an $x \in (\Z/p^e \Z)^{\times}$ is computed in various ways, e.g.\ by Euler's theorem or Euclidean algorithm with time complexity $O((e \log_2 p)t(p^e))$ and space complexity $O(s(p^e))$, and Taylor expansion with time complexity $O((\log_2 p)t(p))$ and space complexity $O(p s(p))$ for preprocess and with time complexity $O(et(p^e))$ and space complexity $O(s(p^e))$ par query. Fixing an implementation of the computation of the inverse, we denote by $t_i(e)$ (resp.\ $s_i(e)$) its worst-case time (resp.\ space) complexity for an $e \in \N$.

\vs
When we write pseudocode, a for-loop along a subset of $\N$ denotes the loop of the ascending order, and a for-loop along $I$ denotes a loop in an arbitrary order.

\vs
For an $n \in \N$, a subset $S$ of $\N^n$, and a function $g \colon S \to \R$, we denote by $O(g)$ the set of functions $h \colon S \to \R$ satisfying that there is a $(C,R) \in (0,\infty)^2$ such that for any $x \in \N^n$ in the domains of both of $g$ and $h$, if every entry of $x$ is greater than $R$, then $\v{h(x)}$ is smaller than $Cg(x)$.

\section{$p$-adic Character}
\label{p-adic Character}

\begin{dfn}
Let $G$ be a topological group. A {\it $p$-adic character} on $G$ is a continuous group homomorphism $G \to \Qp^{\times}$. We denote by $G^{\vee}$ the set of $p$-adic characters on $G$, which forms an Abelian group with respect to the pointwise multiplication, and call it {\it the $p$-adic Pontryagin dual} of $G$.
\end{dfn}

We recall the structure of the $p$-adic Pontryagin dual $\Zp^{\vee}$ of the additive group $\Zp$. For any $a \in 1 + p \Zp$, the group homomorphism
\be
\Z & \to & \Qp^{\times} \\
i & \mapsto & a^i
\ee
uniquely extends to a $p$-adic character
\be
a^{\bullet} \colon \Zp & \to & \Qp^{\times} \\
x & \mapsto & a^x \coloneqq \sum_{e=0}^{\infty} \binom{x}{e} (a-1)^e
\ee
by binomial coefficient theorem, where $\binom{x}{d}$ denotes the binomial coefficient function $(d!)^{-1} \prod_{i=0}^{d-1} (x-i)$ of degree $d$.

\vs
On the other hand, for any $p$-adic character $\chi$ on $\Zp$, its restriction to $\Z$ is characterised by the image $\chi(1)$ of $1$, which belongs to $1 + p \Zp$ by the continuity of $\chi$, and hence we have $\chi = \chi(1)^{\bullet}$. Therefore, the map
\be
1 + p \Zp & \to & \Zp^{\vee} \\
a & \mapsto & a^{\bullet}
\ee
is bijective. In fact, it is a group isomorphism with respect to the multiplication on $1 + p \Zp \subset \Qp^{\times}$.

\begin{exm}
We introduce three important examples of $p$-adic characters on $\Zp$.
\bi
\item[(1)] {\it The trivial $p$-adic character} on a topological group $G$ is the constant map $G \to \Qp^{\times}$ with value $1$. The trivial $p$-adic character on $\Zp$ is presented as $1^{\bullet}$.
\item[(2)] When $p = 2$, then we have $-1 \in 1 + 2 \Z_2$, and hence the $p$-adic character $(-1)^{\bullet}$ on $\Zp$ makes sense. By the definition, we have $((-1)^{\bullet})^2 = 1^{\bullet}$, and hence $(-1)^{\bullet}$ is of order $2$.
\item[(3)] The convergent power series
\be
\exp_p(x) \coloneqq \sum_{e=0}^{\infty} \frac{1}{e!} x^e \in \Q[[x]]
\ee
converges in
\be
\set{x \in \Qp}{\v{x} < \v{p}^{1/(p-1)}} = q \Zp,
\ee
where $q = 4$ when $p = 2$ and $q = p$ otherwise, and defines a $p$-adic character $\exp_p$ on the additive group $q \Zp$. We call $\exp_p$ {\i the $p$-adic exponential}. In particular, $\exp_p(qx)$ is a $p$-adic character on $\Zp$ with the Taylor expansion
\be
\exp_p(qx) = \sum_{e=0}^{\infty} \frac{q^e}{e!} x^e,
\ee
and coincides with $\exp_p(q)^{\bullet}$.
\ei
\end{exm}

There are three ways to compute $p$-adic characters modulo $p^E$ for a fixed precision parameter $E \in \N \setminus \ens{0}$. One is the direct application of the definition using the binomial expansion, i.e.\ the Mahler series (cf.\ \cite{Mah58} Theorem 1). Another one is the application of Taylor expansion. The other one is the binary modular method.

\vs
For all three methods, we may replace the arguments $a \in 1 + p \Zp$ and $x \in \Zp$ by $a \bmod p^E \in (1 + p \Zp) \cap \N_{< p^E}$ and $x \bmod p^{E-1} \in \N_{< p^{E-1}}$, because we have $a^x \in 1 + p^E \Zp$ when $a \in 1 + p^E \Zp$ or $x \in p^{E-1} \Zp$.

\vs
For the first two methods, it is sometimes convenient to preprocess the computation of factorials and their inverses modulo $p^E$ up to $N!$ for a fixed $N \in \N$. For this purpose, it suffices to compute the $p$-adic valuations of factorials and $p$-coprime parts of them modulo $p^E$. The preprocess can is executed with time complexity $O((p-1)^{-1}N s(\max \ens{p^E,N})$ and space complexity $O(N t(p^E))$ after the computation of $p^E$. Here are pseudocodes for the preprocess:

\begin{figure}[H]
\begin{algorithm}[H]
\caption{Computation of $p$-adic valuation and $p$-coprime part of $x \in \Zp$}
\label{coprime}
\begin{algorithmic}[1]
\Function {ValuativeDecomposition}{$p,x$}
	\State $v \gets 0$
	\While {$x \bmod p = 0$}
		\State $x \gets p^{-1}x$
		\State $v \gets v + 1$
	\EndWhile
	\State \Return $(v,x)$
\EndFunction
\end{algorithmic}
\end{algorithm}
\end{figure}

\begin{figure}[H]
\begin{algorithm}[H]
\caption{Preprocessing the computation of the $p$-adic valuations of factorials and $p$-coprime parts of them modulo $p^E$ up to $N!$}
\label{factorial}
\begin{algorithmic}[1]
\Function {ModularFactorial}{$p,E,N$}
	\State $\vec{f} = (f_e)_{e=0}^{N}\gets ((0,1))_{e=0}^{N}$
	\ForAll {$e \in \N_{< N}$}
		\State $(v,u) \gets f_e$
		\State $(v',u') \gets$ \Call{ValuativeDecomposition}{$p,e+1$}
		\State $f_{e+1} \gets (v+v',uu' \bmod p^E)$
	\EndFor
	\State \Return $\vec{f}$
\EndFunction
\end{algorithmic}
\end{algorithm}
\end{figure}

For the first method, the Mahler series $\sum_{e=0}^{\infty} \binom{x}{e} (a-1)^e$ can be truncated to be $\sum_{e=0}^{E-1} \binom{x}{e} (a-1)^e$ because $(a-1)^e \in p^e \Zp$ for any $e \in \N$. The process is executed with time complexity $O(((p-1)^{-1}x+E)t(p^E) + Et_i(E))$ and space complexity $O(s(p^E) + s_i(E))$ after the computation of $p^v$ for each $v \in \N_{\leq E}$. Here is a pseudocode for this process:

\begin{figure}[H]
\begin{algorithm}[H]
\caption{Computation of $a^x$ modulo $p^E$ for $a \in (1 + p \Zp) \cap \N_{< p^E})$ and $x \in \N_{< p^{E-1}}$ using the binomial expansion}
\label{character Mahler series}
\begin{algorithmic}[1]
\Function {CharacterMahlerSeries}{$p,E,a,x$}
	\State $y \gets 0$ \Comment{variable for the return value}
	\State $(v,u) \gets (0,1)$ \Comment{variables for the $p$-adic valuation and the $p$-coprime part of $\binom{x}{e}$}
	\State $b \gets 1$ \Comment{variable for $(a-1)^e$}
	\ForAll {$e \in \N_{< E}$}
		\If {$e > 0$}
			\State $(v',u') \gets$ \Call{ValuativeDecomposition}{$p,x-e+1$}
			\State $(v'',u'') \gets$ \Call{ValuativeDecomposition}{$p,e$}
			\State $u'' \gets$ the inverse of $u''$ modulo $p^{E-e}$
			\State $(v,u) \gets (v+v'-v'',uu'u'' \bmod p^{E-e})$
			\State $b \gets b(a-1) \bmod p^E$
		\EndIf
		\State $y \gets (y + p^vub) \bmod p^E$
	\EndFor
	\State \Return $y$
\EndFunction
\end{algorithmic}
\end{algorithm}
\end{figure}

We note that $p^vub$ in the process above is congruent to $0$ if $v \geq E$. Therefore, we need only to preprocess the computation of $p^v$ for all $v \in \N_{\leq E}$.

\vs
For the second method, we only consider the $p$-adic character $\exp_p(qx)$, because $a^{\bullet}$ can be expressed as
\be
\left\{
\begin{array}{ll}
\exp_p((\log_p a)x) & (p \neq 2 \lor a \in 1 + q \Z_p \lor x \in p \Zp) \\
- \exp_2((\log_2 -a)x) & (p = 2 \land a \in -1 + 4 \Z_2 \land x \in 1 + 2 \Z_2) \\
\end{array}
\right.
\ee
for any $a \in 1 + p \Zp$, where $\log_p$ denotes the map $1 + p \Zp \to \Qp$ called {\it the Iwasawa logarithm} defined by the convergent power series
\be
\log_p x \coloneqq \sum_{e=1}^{\infty} \frac{-(1-x)^e}{e} \in \Q[[x-1]]
\ee
and its values in the expressions above belongs to $q \Zp$.

\vs
Set  $m \coloneqq 2$ when $p = 2$ and $m \coloneqq 1$ otherwise, and $E' \coloneqq \lceil (m - (p-1)^{-1})^{-1}E \rceil$. The Taylor series $\sum_{e=0}^{\infty} \frac{q^e}{e!} x^e$ can be truncated to be $\sum_{e=0}^{E'-1} \frac{q^e}{e!} x^e$ because $\frac{q^e}{e!} \in q^ep^{- \lfloor (p-1)^{-1}e \rfloor} \Zp = p^{\lceil (m-(p-1)^{-1})e \rceil} \Zp$ for any $e \in \N$ by Legendre's formula. By the same reasoning, we may replace the argument $x \in \Zp$ by $x \bmod p^{E-m} \in \N_{< p^{E-m}}$. The process is executed with time complexity $O(E(t(p^E)+t_i(E)))$ and space complexity $O(s(p^E)+s_i(E))$ after the computation of $p^v$ for each $v \in \N_{\leq E}$. Here is a pseudocode for this process:

\begin{figure}[H]
\begin{algorithm}[H]
\caption{Computation of $\exp_p(qx)$ modulo $p^E$ for $a \in (1 + p \Zp) \cap \N_{< p^E})$ and $x \in \N_{< p^{E-m}}$ using the Taylor expansion}
\label{character Taylor series}
\begin{algorithmic}[1]
\Function {CharacterTaylorSeries}{$p,E,a,x$}
	\State $y \gets 0$ \Comment{variable for the return value}
	\State $(v,u) \gets (0,1)$ \Comment{variables for the $p$-adic valuation and the $p$-coprime part of $(e!)^{-1}$}
	\If {$p = 2$}
		\State $m \gets 2$
	\Else
		\State $m \gets 1$
	\EndIf
	\State $q \gets p^m$
	\State $E' \gets \lceil (m - (p-1)^{-1})^{-1}E \rceil$
	\State $b \gets 1$ \Comment{variable for $x^e$}
	\ForAll {$e \in \N_{< E'}$}
		\If {$e > 0$}
        \State $e' = \max \ens{0,E-em}$
			\State $(v',u') \gets$ \Call{ValuativeDecomposition}{$p,e$}
			\State $u' \gets$ the inverse of $u'$ modulo $p^{e'}$
			\State $(v,u) \gets (v-v',uu' \bmod p^{e'})$
			\State $b \gets bx \bmod p^{e'}$
		\EndIf
		\State $y \gets (y + p^{v+me}ub) \bmod p^E$
	\EndFor
	\State \Return $y$
\EndFunction
\end{algorithmic}
\end{algorithm}
\end{figure}

For the third method, since we have already reduced it to the case $a \in \N_{< p^E}$ and $x \in \N_{p^{E-1}}$, we have nothing to improve from the classical binary modular method. The process is executed with time complexity $O((\log_2 x) t(p^E))$ and space complexity $O(s(p^E))$ after the computation of $p^E$. Here is a pseudocode for this process:

\begin{figure}[H]
\begin{algorithm}[H]
\caption{Computation of $a^x$ modulo $p^E$ for $a \in (1 + p \Zp) \cap \N_{< p^E})$ and $x \in \N_{< p^{E-1}}$ using binary modular method}
\label{character binomial modular method}
\begin{algorithmic}[1]
\Function {CharacterBinaryModular}{$p,E,a,x$}
	\State $y \gets 0$ \Comment{variable for the return value}
	\While {$x \neq 0$}
		\If {$x \bmod 2 = 1$}
			\State $y \gets yx \bmod p^E$
		\EndIf
		\State $x \gets x^2 \bmod p^E$
		\State $x \gets \lfloor \frac{x}{2} \rfloor$
	\EndWhile
	\State \Return $y$
\EndFunction
\end{algorithmic}
\end{algorithm}
\end{figure}

We note that the expressions $x \bmod 2$ and $\lfloor \frac{x}{2} \rfloor$ in the process above naturally make sense even when $p \neq 2$, because $x$ is reduced to a natural number rather than a general $p$-adic integer.

\vs
If we only care about the simplicity, it suffices to use the third method. However, the first method has the benefit that it gives information of the Mahler series, which is useful when we need to deduce a $p$-adic continuous function from sample data possibly with noise small with respect to the supremum norm. The second method has the benefit that it gives information of coefficients of polynomial approximation, which are useful when we apply methods for polynomials such as $p$-adic Newton's method. It is good to choose the most suitable method for the purpose of the use of a $p$-adic character.

\vs
We recall a characterisation of the injectivity of a $p$-adic character.

\begin{prp}
\label{injectivity}
Let $\chi$ be a $p$-adic character on $\Zp$. The the following are equivalent:
\bi
\item[(1)] The character $\chi$ is nether $1^{\bullet}$ nor $(-1)^{\bullet}$ for the case $p = 2$.
\item[(2)] The character $\chi$ is injective.
\ei
\end{prp}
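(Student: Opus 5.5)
Write $a \coloneqq \chi(1) \in 1 + p \Zp$, so that $\chi = a^{\bullet}$ by the bijection $1 + p\Zp \to \Zp^{\vee}$ recalled above. Since $\chi$ is a group homomorphism, it is injective if and only if its kernel is trivial. The plan is therefore to compute $\ker \chi$ in terms of $a$, and to show that it is nontrivial exactly when $a = 1$, or when $p = 2$ and $a = -1$.

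For (2) $\Rightarrow$ (1) the argument is direct. If $\chi = 1^{\bullet}$, then $\chi(1) = 1 = \chi(0)$. If $p = 2$ and $\chi = (-1)^{\bullet}$, then $\chi(2) = (-1)^2 = 1 = \chi(0)$. In either case $\chi$ is not injective.

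For (1) $\Rightarrow$ (2), suppose $x \in \Zp$ is nonzero with $\chi(x) = 1$. Write $x = p^k u$ with $u \in \Zp^{\times}$, and set $b \coloneqq a^{p^k} = \chi(p^k) \in 1 + p\Zp$. The homomorphism $y \mapsto \chi(p^k y) = b^y$ is then a $p$-adic character sending the unit $u$ to $1$.

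The first key step is to show that $b^u = 1$ forces $b = 1$. Since $b^{\bullet}$ is a continuous homomorphism, its kernel is a closed subgroup of $\Zp$. The closed subgroups of $\Zp$ are the ideals $p^j\Zp$ and $\{0\}$. A closed subgroup containing the unit $u$ contains $u\Zp = \Zp$, so the kernel is all of $\Zp$. In particular $b = b^1 = 1$. The same conclusion could instead be reached by approximating $u^{-1}$ by integers.

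The second key step is to show that $a^{p^k} = 1$ with $a \in 1 + p\Zp$ implies $a = 1$, or $p = 2$ and $a = -1$. This is the claim that the torsion in $1 + p\Zp$ is trivial for odd $p$ and equals $\{\pm 1\}$ for $p = 2$. It reduces to showing that $a^p = 1$ forces $a = 1$ (respectively $a^2 = 1$ forces $a = \pm 1$). I would argue valuatively. Write $a = 1 + t$ with $t \neq 0$ and $v(t) \geq 1$, where $v(t) \geq 2$ if $p = 2$ and $a \neq -1$. Then
\begin{equation*}
a^p - 1 = pt + \binom{p}{2}t^2 + \cdots + t^p .
\end{equation*}
In this sum the term $pt$ has strictly smallest valuation $v(t)+1$, so $a^p \neq 1$. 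For $p = 2$, the case $v(t) = 1$ with $a \neq -1$ is handled by writing $a = -(1+t')$ with $v(t') \geq 2$ and applying the same estimate to $1+t'$. Iterating over the factor $p^k$ one power of $p$ at a time, and noting for $p = 2$ that $a^2 \in 1 + 8\Z_2$, gives the second key step. Combined with the first key step, this contradicts (1).

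I expect the main obstacle to be the valuation estimate for $p = 2$, where the distinction between $1 + 2\Z_2$ and $1 + 4\Z_2$ must be tracked carefully.
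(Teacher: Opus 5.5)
Your proof is correct and follows essentially the same route as the paper: closedness of $\ker\chi$ forces $\chi(p^k) = 1$ for some $k$, so $\chi(1)$ is a $p$-power torsion element of $1 + p\Zp$, hence equal to $1$, or to $-1$ when $p = 2$. The only difference is that you prove this torsion fact directly by the binomial valuation estimate (with the extra care at $p = 2$), whereas the paper reads it off the structure decomposition of $\Qp^{\times}$.
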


\begin{proof}
The implication from (2) to (1) is obvious. We show that the negation of (2) implies the negation of (1). By the assumption that $\chi$ is not injective, there exists some $x \in \ker(\chi) \setminus \ens{0}$. We have $\Z x \subset \ker(\chi)$, and hence $\Zp x \subset \ker(\chi)$ by the closedness of $\ker(\chi)$. By $x \neq 0$, $\Zp x$ is a non-zero ideal of $\Zp$, and hence is of the form $p^e \Zp$ for some $e \in \N$. In particular, $\chi$ induces an injective group homomorphism $\Z/p^e \Z \hookrightarrow \Qp^{\times}$. The residue class $1 + p^e \Z \in \Z/p^e \Z$ of $1$ is of order $p^e$, and hence so is its image in $\Qp^{\times}$. However, the multiplicative group
\be
\Qp^{\times} \cong 
\left\{
\begin{array}{ll}
\Z \times \ens{1,-1} \times (1 + 4 \Z_2) & (p = 2) \\
\Z \times \Fp^{\times} \times (1 + p \Zp) & (p \neq 2)
\end{array}
\right.
\ee
admits at most two elements of $p$-power order: $1$ and $-1$ for the case $p = 2$. The former case corresponds to the case $e = 0$ and $\chi = 1^{\bullet}$, and the latter case corresponds to the case $(p,e) = (2,1)$ and $\chi = (-1)^{\bullet}$.
\end{proof}

When a given $p$-adic character $\chi$ is injective, it is expressed as $a^{\bullet}$ for some $a \in (1 + p \Zp) \setminus \ens{1,-1}$ by Proposition \ref{injectivity}. The inverse of the $p$-adic character whose codomain is restricted to its image $a^{\Zp} \subset 1 + p \Zp$ is given by the group homomorphism
\be
a^{\Zp} & \to & \Zp \\
y & \mapsto & 
\left\{
\begin{array}{ll}
\frac{\log_p y}{\log_p a} & (y \in 1 + q \Zp ) \\
\frac{\log_p -y}{\log_p -a} & (y \in -1 + q \Zp)
\end{array}
\right..
\ee
We note that the latter case occurs only when $p = 2$ and $a \in -1 + q \Zp$. Using the inverse map, the deduction of arguments of a $p$-adic character is reduced to the deduction of its values.

\section{Universal Approximation}
\label{Universal Approximation}

For any $\Qp$-valued function $f$ on $\Zp$ and any $\vec{x} = (x_i)_{i=0}^{n-1} \in\Zp^n$ with $n \in \N$, we abbreviate $(f(x_i))_{i=0}^{n-1} \in \Qp^n$ to $f(\vec{x})$. Let $\chi$ be an injective $p$-adic character on $\Zp$.

\begin{thm}[Universal approximation theorem for a $p$-adic character]
\label{universal approximation theorem}
For any $(n,m) \in \N^2$, any continuous map $f \colon \Zp^n \to \Qp^m$, and any $\epsilon \in \R_{>0}$, there exists a tuple $(d,A,\vec{b},C)$ of a $d \in \N$, an $A \in \rM_{d,n}(\Zp)$, a $\vec{b} \in \Zp^d$, and a $C \in \rM_{m,d}(\Qp)$ such that for any $\vec{x} \in \Zp^n$, the inequality $\n{f(\vec{x}) - C \chi(A \vec{x} + \vec{b})} < \epsilon$ holds with respect to the $\ell^{\infty}$-norm $\n{\cdot}$ on $\Qp^m$.
\end{thm}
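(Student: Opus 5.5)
Reduce first to $m=1$ (approximate each coordinate separately and stack the resulting $C$'s, concatenating $A$ and $\vec{b}$), then to locally constant $f$. Since $\Zp^n$ is compact, $f$ is uniformly continuous, so there is $k\in\N$ with $\v{f(\vec{x})-f(\vec{x}')}<\epsilon$ whenever $\vec{x}\equiv\vec{x}' \bmod p^k$. The function $g(\vec{x}) \coloneqq f(\vec{r})$, where $\vec{r}$ is the representative of $\vec{x}$ modulo $p^k$ in $\N_{<p^k}^n$, then satisfies $\v{f-g}<\epsilon$. This $g$ is a finite $\Qp$-linear combination of indicator functions $1_{\vec{r}+p^k\Zp^n}$. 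So it suffices to show that each such indicator lies uniformly within an arbitrary $\delta>0$ of the $\Qp$-span of functions $\vec{x}\mapsto\chi(\vec{a}\cdot\vec{x}+b)$ with $\vec{a}\in\Zp^n$ and $b\in\Zp$. Choosing $\delta$ small relative to $\max\v{f}$ then finishes the argument.

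Next, factor the indicator. We have $1_{\vec{r}+p^k\Zp^n}(\vec{x})=\prod_{j}1_{r_j+p^k\Zp}(x_j)$. Since $\chi(\vec{a}\cdot\vec{x}+b)=\chi(b)\prod_j\chi(a_jx_j)$, products of functions of the form $\chi(ax_j)$ in different variables are again of the admissible shape. It is therefore enough to approximate $1_{r+p^k\Zp}$ on $\Zp$, uniformly, by a finite linear combination $\sum_s c_s\chi(s x)$ with $s\in\Zp$; a translation $x\mapsto x-r$ is absorbed into $\vec{b}$. Multiplying the one-variable approximations across coordinates keeps the uniform error small, because all functions involved are bounded: $\v{\chi}=1$ and the coefficients are fixed.

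The key one-variable step is to write $\chi=a^{\bullet}$ with $a\in(1+p\Zp)\setminus\ens{1,-1}$, as allowed by Proposition \ref{injectivity}. I would use the averaging identity
\[
\frac{1}{p^k}\sum_{j=0}^{p^k-1}\zeta^{jx}=1_{p^k\Zp}(x)
\]
for a primitive $p^k$-th root of unity $\zeta$. However, $\zeta\notin\Qp$ and $\chi$ is injective, so this must be replaced by a $\Qp$-valued substitute. I would instead use the fact that $\chi(p^Nx)=a^{p^Nx}$ with $a^{p^N}\to1$, so $\chi(p^N x)=\exp_p(p^N\lambda x)$ with $\lambda=\log_p a\neq0$. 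Taking logarithmically, $(\chi(p^Nx)-1)/(p^N\lambda)\to x$ uniformly on $\Zp$. Thus finite linear combinations of the functions $\chi(sx)$ approximate the identity function $x$ uniformly, and hence, using multiplicativity $\chi(sx)\chi(tx)=\chi((s+t)x)$, they approximate every polynomial. In other words, the closure of the span of $\ens{\chi(s\,\cdot)}_{s\in\Zp}$ is a closed subalgebra containing $1$ and $x$, which is the setting of the $p$-adic Stone--Weierstrass theorem, or equivalently Mahler's theorem applied to $\binom{x}{e}$. By Mahler's theorem, polynomials are dense in $C(\Zp,\Qp)$, so $1_{p^k\Zp}$ is approximable. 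The same argument applied directly to polynomials in $n$ variables, with $\chi(\vec{s}\cdot\vec{x})$, gives the $n$-dimensional case without factoring.

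The main obstacle is making the uniform convergence $(\chi(p^Nx)-1)/(p^N\lambda)\to x$ precise, together with controlling the growth of coefficients when approximating a polynomial. The division by $p^N\lambda$ inflates errors, so each approximation must be done in the right order. Concretely, I would fix the target polynomial $P$ first. I would then expand $P(x)$ exactly as a combination of the functions $\chi(jp^Nx)$, using the finite-difference identity
\[
x^e\approx(p^N\lambda)^{-e}\sum_{j}(-1)^{e-j}\binom{e}{j}\chi(jp^Nx),
\]
whose error is $O(\v{p}^{N}\cdot\v{p^N\lambda}^{0})$ by the Taylor expansion of $\exp_p$. Finally I would take $N$ large enough that the error is below $\delta$.
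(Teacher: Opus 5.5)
Your proposal is correct, but it establishes density by a different route from the paper's.

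The paper takes $R$ to be the set of functions $\vec{x}\mapsto C\chi(A\vec{x})$ and checks that it is a $\Qp$-subalgebra of $\rC(\Zp^n,\Qp)$: sums come from stacking the $A$'s, and products from $\chi(u)\chi(v)=\chi(u+v)$. It then observes that $R$ separates points simply because $\chi$ is injective and the coordinate projections have entries in $\Zp$. Kaplansky's non-archimedean Stone--Weierstrass theorem then applies directly. The case $m>1$ is handled with a block-diagonal $C$, as in your reduction.

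You instead show that the closure of the same algebra contains each coordinate function, via the uniform limit
\[
\frac{\chi(p^N x)-1}{p^N\lambda} \;=\; x+\sum_{e\geq 2}\frac{(p^N\lambda)^{e-1}x^e}{e!}\;\longrightarrow\; x .
\]
This uses injectivity only in the form $\lambda=\log_p a\neq 0$, and it needs $N\geq 1$ when $p=2$ and $a\in -1+4\mathbb{Z}_2$. After that you need only the density of polynomials, i.e.\ Mahler's theorem in one or several variables.

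The paper's route is shorter and needs neither the logarithm nor the exponential series. Yours rests on the more classical polynomial-density statement and is quantitative. The identity $(\chi(p^Nx)-1)^e=\sum_j(-1)^{e-j}\binom{e}{j}\chi(jp^Nx)$ gives an explicit network approximating $x^e$ within $\sup_{e'\geq 2}|p^N\lambda|^{e'-1}/|e'!|=O(|p|^N)$. That fits the computational viewpoint of \S\ref{Formulation}.

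Two small remarks. First, your worry about inflated errors is unfounded: the error is measured after dividing by $p^N\lambda$, and large entries of $C$ are harmless for uniform approximation. Second, once you run the argument with $\chi(\vec{s}\cdot\vec{x})$ in $n$ variables, your first two paragraphs become unnecessary. Those are the reduction to indicator functions and the factoring over coordinates.
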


\begin{proof}
We note that the assertion is true even if we additionally assume that $\vec{b}$ should be chosen to be the zero vector. We first show the assertion for the case $m = 1$. We denote by $R \subset \rC(\Zp^n,\Qp)$ the subset of functions $g$ such that there exists a tuple $(d,A,C)$ of a $d \in \N$, an $A \in \rM_{d,n}(\Zp)$, and a $C \in \rM_{1,d}(\Qp)$ such that for any $\vec{x} \in \Zp^n$, the equality $g(\vec{x}) = C \chi(A \vec{x})$ holds.

\vs
By $p$-adic Stone--Weierstrass theorem (cf.\ \cite{Kap50} Theorem and \cite{Ber90} 9.2.5.\ Theorem), it suffices to show that $R$ is a $\Qp$-subalgebra of $\rC(\Zp^n,\Qp)$ separating points of $\Zp^n$. We show that $R$ separates points of $\Zp$. For any $\vec{x}_0, \vec{x}_1 \in \Zp^n$ with $\vec{x}_0 \neq \vec{x}_1$, there exists a $g \in R$ such that $g(\vec{x}_0) \neq g(\vec{x}_1)$. Indeed, by $\vec{x}_0 \neq \vec{x}_1$, there exists an $i \in \N_{< n}$ such that the $(1+i)$-th entry $x_{0,i}$ of $\vec{x}_0$ is distinct from the $(1+i)$-th entry $x_{1,i}$ of $\vec{x}_1$, and we have
\be
\chi(P_i \vec{x}_0) = \chi(x_{0,i}) \neq \chi(x_{1,i}) = \chi(P_i \vec{x}_1)
\ee
by the injectivity of $\chi$, where $P_i \in \rM_{1,n}(\Zp)$ denotes the $(1+i)$-th projection.

\vs
We show that $R$ is closed under addition and multiplication. Let $(g_0,g_1) \in R^2$. By the definition of $R$, there exist a tuple $(d_s,A_s,C_s)$ of a $d_s \in \N$, an $A_s = ((A_{s,i,j})_{j=0}^{n-1})_{i=0}^{d_s-1} \in \rM_{d_s,n}(\Zp)$, and a $C_s = ((C_{s,i,j})_{j=0}^{d_s-1})_{i=0}^{0} \in \rM_{1,d_s}(\Qp)$ such that for any $\vec{x} \in \Zp^n$, the equality $g_s(\vec{x}) = C_s \chi(A_s \vec{x})$ holds for each $s \in \{0,1\}$. We have
\be
(g_0 + g_1)(\vec{x}) & = & C_0 \chi(A_0 \vec{x}) + C_1 \chi(A_1 \vec{x}) \\
& = & \sum_{s=0}^{1} \sum_{i=0}^{0} \sum_{j=0}^{d_s-1} C_{s,i,j} \chi \left( \sum_{k=0}^{n-1} A_{s,j,k} x_k \right) \\
& = & \sum_{s=0}^{1} \sum_{j=0}^{d_s-1} C_{s,0,j} \chi \left( \sum_{k=0}^{n-1} A_{s,j,k} x_k \right) \\
& = & \left( 
\begin{array}{cc}
C_0 & C_1
\end{array}
\right) \chi \left( \left( 
\begin{array}{c}
A_0 \\
A_1
\end{array}
\right) \vec{x} \right)
\ee
and
\be
(g_0 g_1)(\vec{x}) & = & C_0 \chi(A_0 \vec{x}) C_1 \chi(A_1 \vec{x}) \\
& = & \prod_{s=0}^{1} \sum_{i=0}^{0} \sum_{j=0}^{d_s-1} C_{s,i,j} \chi \left( \sum_{k=0}^{d_s-1} A_{s,j,k} x_k \right) \\
& = & \sum_{j_0=0}^{d_0-1} \sum_{j_1=0}^{d_1-1} C_{0,0,j_0} C_{1,0,j_1} \chi \left( \sum_{k=0}^{n-1} A_{0,j,k} x_k \right) \chi \left( \sum_{k=0}^{n-1} A_{1,j,k} x_k \right) \\
& = & \sum_{j_0=0}^{d_0-1} \sum_{j_1=0}^{d_1-1} C_{0,0,j_0} C_{1,0,j_1} \chi \left( \sum_{k=0}^{n-1} A_{0,j_0,k} x_k + \sum_{k=0}^{n-1} A_{1,j_1,k} x_k \right) \\
& = & \sum_{j_0=0}^{d_0-1} \sum_{j_1=0}^{d_1-1} C_{0,0,j_0} C_{1,0,j_1} \chi \left( \sum_{k=0}^{n-1} (A_{0,j_0,k} + A_{1,j_1,k}) x_k \right) \\
& = & ((C_{0,0,j \bmod d_0} C_{1,0,\lfloor j/d_0 \rfloor})_{j=0}^{d_0 d_1 - 1})_{i=0}^{0} \chi \left( ((A_{0,j \bmod d_0,k} + A_{1,\lfloor j/d_0 \rfloor,k})_{k=0}^{n-1})_{j=0}^{d_0 d_1 - 1} \vec{x} \right)
\ee
for any $\vec{x} \in \Zp^n$, and hence $g_0 + g_1, g_0 g_1 \in R$. By the connectivity of scalar multiplication and matrix multiplication, $R$ is closed under scalar multiplication by $\Qp$. Therefore, $R$ is a $\Qp$-subalgebra of $\rC(\Zp^n,\Qp)$.

\vs
We next consider the general case where $m$ is not necessarily $1$. By the argument above for the case $m = 1$, there exists a tuple $(\vec{d},\vec{A},\vec{C})$ of a $\vec{d} = (d_j)_{j=0}^{m-1} \in \N^m$, a $\vec{A} = (A_j)_{j=0}^{m-1} \in \prod_{j=0}^{m-1} \rM_{d_j,n}(\Zp)$, and a $\vec{C} = (C_j)_{j=0}^{m-1} \in \prod_{j=0}^{m-1} \rM_{1,d_j}(\Qp)$ such that for any $\vec{x} \in \Zp^n$, the inequality $\n{f(\vec{x}) - (C_j \chi(A_j \vec{x}))_{j=0}^{m-1}} < \epsilon$ holds.

\vs
Setting
\be
d & \coloneqq & \sum_{j=0}^{m-1} d_j \\
A & \coloneqq & \left(
\begin{array}{c}
A_0 \\
A_1 \\
\vdots \\
A_{m-1}
\end{array}
\right) \in \rM_{d,n}(\Zp) \\
C & \coloneqq & \left(
\begin{array}{ccccc}
C_0 & O & \cdots & O \\
O & C_1 & \cdots & O \\
\vdots & \ddots & \ddots & \vdots \\
O & O & \cdots & C_{m-1}
\end{array}
\right) \in \rM_{m,d}(\Qp),
\ee
we have
\be
(C_j \chi(A_j \vec{x}))_{j=0}^{m-1} = \left(
\begin{array}{c}
C_0 \chi(A_0 \vec{x}) \\
C_1 \chi(A_1 \vec{x}) \\
\vdots \\
C_{m-1} \chi(A_{m-1} \vec{x})
\end{array}
\right) = C \chi(A \vec{x}),
\ee
and hence the inequality $\n{f(\vec{x}) - C \chi(A \vec{x})} < \epsilon$ holds for any $\vec{x} \in \Zp^n$.
\end{proof}

\section{Formulation}
\label{Formulation}

By Theorem \ref{universal approximation theorem}, we obtain a new formulation of a $p$-adic neural network. Let $\chi$ be an injective $p$-adic character on $\Zp$, $I$ a finite set, $X = (\vec{x}_i)_{i \in I} \in (\Zp^N)^I$ a sequence of sample points with $N \in \N$, $Y = (\vec{y}_i)_{i \in I} \in (\Qp^M)^I$ a sequence of observed values corresponding to $X$ with $M \in \N$, and $D \in \N_{> 0}$ a hyperparameter for dimension. {\it The $p$-adic character neural network} is the following optimisation problem:
\be
\begin{array}{ll}
\textrm{minimise} & \n{(\vec{y}_i - C \chi(A \vec{x}_i + \vec{b}))_{i \in I}} \\
\textrm{subject to} & (A,\vec{b},C) \in \rM_{D,N}(\Zp) \times \Zp^D \times \rM_{M,D}(\Qp),
\end{array}
\ee
where the norm in the target expression of the minimisation is an arbitrary fixed norm on $\Qp^I$ such as the $\ell^{\infty}$-norm or the $\ell^1$-norm.

\vs
Practically, we additionally need to set hyperparameters $E \in \N_{> 0}$ and $F \in \N$ for precision and an upperbound of the $p$-adic valuations of denominators, and assume the following:
\bi
\item[(1)] Each entry of $X$ is observed up to congruence modulo $p^{E+F-1}$.
\item[(2)] Each entry of $Y$ is observed up to congruence modulo $p^E$.
\item[(3)] Each entry of $Y$ belongs to $p^{-F} \Zp \subset \Qp$.
\ei
Then the $p$-adic neural network is reformulated as the following approximated optimisation problem:
\be
\begin{array}{ll}
\textrm{minimise} & \n{(\vec{y}_i - C \chi(A \vec{x}_i + \vec{b}))_{i \in I} \bmod p^E} \\
\textrm{subject to} & (A,\vec{b},C) \in \rM_{D,N}(\N_{< p^{E+F-1}}) \times \N_{< p^{E+F-1}}^D \times p^{-F} \rM_{M,D}(\N_{< p^{E+F}}),
\end{array}
\ee
where $Z \bmod p^E$ for a $Z \in p^{-F} (\Zp^M)^I$ denotes a unique element of $p^{-F} (\N_{< p^{E+F}}^M)^I$ congruent to $Z$ modulo $p^E (\Zp^M)^I$.

\vs
As we have explained in \S \ref{Introduction}, the benefit of $p$-adic character neural network against the original $p$-adic neural network based on characteristic functions of clopen subsets is that we avoid to use denominators for $A$ and $\vec{b}$, which cause scaling of the arguments and prevents uniform approximation by polynomials.

\vs
Thanks to this benefit, the second method of the computation of $\chi$ explained in \S \ref{p-adic Character} allows us to express $(\vec{y}_i - C \chi(A \vec{x}_i + \vec{b}))_{i \in I} \bmod p^E$ as a sequence of polynomials on entries of $A$, $\vec{b}$, and $p^F C$. Therefore, multiplying $p^F$ and replacing $E$ by $E + F$, the $p$-adic character neural network is reduced to the following optimisation problem for a given sequence $(f_i)_{i \in I}$ of $L$-variable polynomials over $\Zp$ with $L \in \N$:
\be
\begin{array}{ll}
\textrm{minimise} & \n{(f_i(\vec{z}))_{i \in I} \bmod p^E} \\
\textrm{subject to} & \vec{z} \in \N_{< p^E}^L
\end{array}
\ee
When we consider the $\ell^1$-norm on $\Qp^I$, the minimisation problem for the simple case where $f_i$ is of degree $\leq 1$ for any $i \in I$ and $E = 1$ is the maximal feasible subsystem problem for linear equations over $\Fp$, which is APX-complete, i.e.\ complete for the class of problems which allow constant-factor approximations, by \cite{AK95} Proposition A.1. As a future study, we consider effective heuristic algorithms of $p$-adic character neural network.

\vs
When we consider $\ell^{\infty}$-norm on $\Qp^I$, the minimisation problem is reduced to the feasibility problem of polynomial equations over $\Z/p^e \Z$ for $e \in \N_{\leq E}$. Indeed, if there is an $e \in \N_{\leq E}$ such that the system
\be
\forall i \in I, f_i(\vec{z})\equiv 0 \pmod{p^e}
\ee
has not solution $\vec{z} \in \N_{< p^e}$, then the minimum of the target expression $\n{(f_i(\vec{z}))_{i \in I} \bmod p^E}$ is expressed as $\v{p}^{e_{\max} - 1}$, where $e_{\max}$ denotes the maximum of such an $e$. If there is no such $e$, then the system for $e = E$ has a solution, and hence the minimum coincides with $0$.

\vs
In order to solve the feasibility problem of polynomial equations over the principal ideal ring $\Z/p^e \Z$ for a single $e \in \N$, it suffices to compute a Gr\"obner basis (cf.\ \cite{NS01} for the formulation of a Gr\"obner basis of an ideal of polynomials over a principal ideal ring and \cite{KK25} \S 3 for the application of a Gr\"obner basis to the feasibility problem of polynomial equations over a principal ideal ring).

\vs
In order to solve the feasibility problem of polynomial equations over $\Z/p^e \Z$ for varying $e \in \N$, it suffices to apply digit dynamic programming with base $p$. Indeed, every common zero in $\Z/p^{e+1} \Z$ gives a common zero in $\Z/p^e \Z$ for any $e \in \N$, and hence the set of all common zero can be computed inductively on $e$. Here is a pseudocode for the process:

\begin{figure}[H]
\begin{algorithm}[H]
\caption{Computation of the maximum of an $e \in \N$ for which a given system $\vec{f} = (f_i(\vec{x}))_{i \in I}$ of polynomials with $L$-variables over $\Z$ has a common zero in $\Z/p^e \Z$ by digit dynamic programming with base $p$}
\begin{algorithmic}[1]
\Function {DigitDynamicProgramming}{$p,L,\vec{f}$}
	\State $e \gets 0$
	\State $Z \gets$ the array with the single entry $(0)_{l=0}^{L-1}$ \Comment{variable for an array of common zeros of $\vec{f}$ modulo $p^e$}
	\While {True}
		\State $W \gets$ the empty array \Comment{variable for an array of common zeros of $\vec{f}$ modulo $p^{e+1}$}
		\ForAll {entry $\vec{z}$ of $Z$}
			\ForAll {$\vec{d} \in \N_{<p}^L$}
				\State $\vec{w} \gets \vec{z} + p^e \vec{d}$
				\If {$\vec{w}$ is a common zero of $\vec{f}$ modulo $p^{e+1}$}
					\State Append $\vec{w}$ to $W$
				\EndIf
			\EndFor
		\EndFor
		\If {$W$ is empty}
			\State \Break
		\EndIf
		\State $e \gets e + 1$
		\State $Z \gets W$
	\EndWhile
	\State \Return $e$
\EndFunction
\end{algorithmic}
\end{algorithm}
\end{figure}

We note that this process results in an infinite loop if $\vec{f}$ has a common zero in $\Z/p^e \Z$ for all $e \in \N$, or equivalently, has a common zero in $\Zp$. If we need to know whether $\vec{f}$ has a common zero in $\Z/p^E \Z$ or not and the maximum of an $e \in \N_{< E}$ for which $\vec{f}$ has a common zero in $\Z/p^e \Z$, then it suffices to break the while loop when $e = E$.

\vspace{0.3in}
\addcontentsline{toc}{section}{Acknowledgements}
\noindent {\Large \bf Acknowledgements}
\vspace{0.2in}

\noindent
I thank all people who helped me to learn mathematics and programming. I also thank my family.

%

\end{document}